\definecolor{MyLinkColor}{rgb}{0,0,0.4}
\newcommand{\R}{{\mathbb R}}
\newcommand{\E}{{\mathcal E}}
\newcommand{\N}{{\mathbb N}}
\newcommand{\wt}{\widetilde}
\newcommand{\ov}{\overline}
\newcommand{\p}{\partial}
\newcommand{\e}{\varepsilon}
\newcommand{\0}{\Omega}
\newtheorem{thm}{Theorem}[section]
\newtheorem{prop}[thm]{Proposition}
\newtheorem{lemma}[thm]{Lemma}
\theoremstyle{remark} 
\numberwithin{equation}{section}
\title[Global weak solutions for a fourth order parabolic system]{Non-negative global weak solutions for a degenerate parabolic system modeling thin films driven by capillarity}
\thanks{}
\author[B.--V. Matioc]{Bogdan--Vasile Matioc}
\address{Institut f{\"u}r Angewandte Mathematik, Leibniz Universit{\"a}t Hannover, Welfengarten~1, 30167 Hannover, Germany.}
\email{matioc@ifam.uni-hannover.de}
\subjclass[2010]{35D30, 35K41, 35K55, 35K65, 35Q35}
\keywords{Thin Film; degenerate parabolic system; non-negative global weak solutions}
\begin{document}

\begin{abstract}
We prove  global existence of 
 non-negative   weak solutions for a strongly coupled, fourth order  degenerate parabolic system 
governing the motion of two  thin fluid layers in a porous medium when capillarity is the sole driving mechanism.
\end{abstract}

\maketitle

%%% SECTION: INTRO %%%

%%%%%%%%%%%%%%%%%%%%%%%%%%%%%%%%%%%%%%%%
%%%%%%%%%%%%%%%%%%%%%%%%%%%%%%%%%%%%%%%%
\section{Introduction and the main result}\label{S:0}
%%%%%%%%%%%%%%%%%%%%%%%%%%%%%%%%%%%%%%%%
%%%%%%%%%%%%%%%%%%%%%%%%%%%%%%%%%%%%%%%%
In this paper we study the following one-dimensional degenerate system of  equations 
 \begin{equation}\label{eq:S}
\left\{
\begin{array}{llll}
\p_t f=&-\p_x\left[   f\p_x^3\left(Af+Bg\right)\right],\\[1ex]
\p_tg=&-\p_x\left[   g\p_x^3\left(f+g\right)\right],
\end{array}
\right.
{  \quad (t,x)\in (0,\infty)\times (0,L),}
\end{equation}
which models the dynamics of two thin fluid threads in a porous medium in the absence of gravity.
One of the fluids is located in the region bounded from below by  the line $y=0$
and from above by the graph $y=f(t,x)$, while the region occupied by the second fluid is located between the graphs $y=f(t,x)$ and $y=(f+g)(t,x),$ 
 $f$ and $g$ being non-negative functions.
Furthermore, $L$ is a positive real number and the positive  constants $A$ and $B$ have the following physical meaning
\[
A:=\frac{\mu_+}{\mu_-}\frac{\gamma_d+\gamma_w}{\gamma_d}>B:=\frac{\mu_+}{\mu_-}.
\]
We let $\mu_-$ [resp. $\mu_+$] denote the viscosity   of the fluid located below [resp. above],  $\gamma_w$ is the surface tension coefficient at the interface $y=f(t,x)$ between  the wetting phases, while 
$\gamma_d$ is the surface tension coefficient at the interface $y=(f+g)(t,x).$
The system \eqref{P:1} is supplemented by initial conditions
\begin{equation}\label{eq:bc1}
 f(0)=f_0,\qquad g(0)=g_0, { \quad x\in (0,L),}
 \end{equation}
whereby $f_0$ and $g_0$ are assumed to be  known, and we impose no-flux boundary conditions 
\begin{equation}\label{eq:bc2}
 \p_xf=\p_xg=\p^3_xf=\p_x^3 g=0,\qquad x=0, L.
 \end{equation}

The system \eqref{eq:S} has been obtain in \cite{EMM2}, by passing to the small layer thickness in the Muskat problem studied in \cite{EMM1}.    
This is a widely used approach in the study of thin fluid threads because it reduces  complex moving boundary value problems to local problems defined by generally simpler  equations. 
System \eqref{eq:S} is strongly related to the Thin Film equation 
 because if, for instance, $f$ is constantly equal to zero, then $g$ is a solution of the   
Thin Film equation
\begin{equation}\label{eq:TFE}
\partial_t g+\p_x(g^n\p_x^3 g)=0,
\end{equation}
when $n=1.$
We refer to the survey papers \cite{B95, Hul}  where many  aspects  concerning the Thin Film equation are discussed.
It should be noted that similar methods to those in \cite{EMM2} have been used in  \cite{GP} and  \cite{MP} to rigorously show that, in the limit of thin fluid threads, the
 solutions of the moving boundary value problems for Stokes and Hele-Shaw flows converge towards the corresponding  
solutions (determined by the initial data) of the Thin Film equation \eqref{eq:TFE}, with $n=3$ for Stokes  and $n=1$ for the Hele-Shaw flow. 
Compared with the Thin Film equation, system \eqref{eq:S} is more involved because it is strongly coupled, both  equations of 
\eqref{eq:S} containing highest order derivatives of $f$ and $g$, and, furthermore, there are two sources of degeneracy, because both $f$ and $g$ may be equal zero.
Since both  equations of \eqref{eq:S}  have  fourth order, we cannot relay on maximum principles when studying problem \eqref{eq:S}.

Corresponding to  \eqref{eq:S}, we define the following energy functionals 
\[
\E_1(f,g):=\frac{1}{2}\int_0^L  |\p_xf|^2+\frac{B}{A-B}|\p_x(f+g)|^2\, dx,\qquad \E_2(f,g):=\int_0^L\Phi(f)+B\Phi(g)\, dx,
\]
whereby the function $\Phi$  is given by $\Phi(s):=s\ln(s)-s+1$ for all $s\geq0.$
They will play the key role when constructing  the weak solutions for the problem  \eqref{eq:S}-\eqref{eq:bc2}.

Using these two functionals and Galerkin approximations, we prove that the problem \eqref{eq:S}-\eqref{eq:bc2} possesses for non-negative initial data 
non-negative global weak solutions. 
To this end, we regularize first the system \eqref{eq:S} and use the functional $\E_2$ to establish convergence of  certain Galerkin approximations 
towards global
 weak solutions (of the regularized problem) which satisfy similar energy estimates as the classical solutions of \eqref{eq:S}-\eqref{eq:bc2}.
In a second step, we show that weak solutions of the regularized problem converge towards non-negative global weak solutions of the  original system \eqref{eq:S}.
The uniqueness of our weak solutions is left as an open problem (this is still an open problem also for the Thin Film equation  cf. \cite{BF, PGG}). 
We note that it has been only recently shown in \cite{T} (see also \cite{BP1, BP2}), in the context of the Thin Film equation, that the
 non-negative weak solutions found in \cite{BF} converge exponentially fast in $H^1$ towards flat equilibria.
In our case, this is a further open question. 
The second order version of \eqref{eq:S}, when the fluids are driven  only by  gravity and surface tension is neglected,
has been recently investigated in  \cite{ELM} where existence of non-negative  global weak solutions
  which converge exponentially fast in $L_2$ to flat equilibria is established (see also \cite{EMM2}).

In order to state our  main result, we  introduce now the function spaces we work with.
For each $m\in\N,$ we let $H^m:=H^m((0,L))$ be the $L_2-$based Sobolev space and we let $H^m_\Delta$ denote the closed 
subspace of $H^m$ which has  $\{\phi_k\,:\, k\in\N\} $ as its basis.
Herein,
\[
\phi_0:=\sqrt{1/L}\qquad\text{and} \qquad \phi_k:=\sqrt{{2}/{L}}\cos(k\pi x/L), \ k\geq 1,
\]
 are  the normalized eigenvectors of the operator $-\p_x^2:H^2\to L_2$ with zero Neumann boundary conditions.
  To be more precise, $f\in H^m_\Delta$ if and only if the  Fourier series associated to $f$ converges towards $f$ in $H^m.$
It is well-known that $H^1_\Delta=H^1$ 
and, it is not difficult to see that, for $m\geq 4,$ the boundary conditions \eqref{eq:bc2} are satisfied by functions from  this space.

Given $T\in (0,\infty],$ let $Q_T:=(0,T)\times(0,L).$ 
The  main result of this paper is the following theorem.
\begin{thm}\label{T:1} Let $f_0,g_0\in H^1$ be two non-negative functions.
There exist a global weak solution  $(f,g)$ of \eqref{eq:S} with $(f(0),g(0))=(f_0,g_0)$ and having  the following properties:
\begin{itemize}
\item[$(1)$] $f\geq0$ and $g\geq0$ in $(0,T)\times (0,L),$
\item[$(2)$] $f, g\in L_\infty(0,T; H^1)\cap L_2(0,T; H^2_\Delta)\cap C([0,T], C^\alpha([0,L]))$  for some arbitrary
 $\alpha \in(0,1/2)$ and $\sqrt{f}\p_x^3(Af+Bg), \sqrt{g}\p_x^3(f+g)\in L_2(Q_T^+),$
where   
\[
Q_T^+:=\{(t,x)\in Q_T\,:\, (fg)(t,x)>0\},
\]
\item[$(3)$]
\begin{align*}
&\int_{0}^Lf(T) \psi\, dx-\int_{0}^Lf_0 \psi\, dx
+\int_{Q_T} (A\p_x^2f+B\p_x^2g)(\p_xf\p_x\psi +f\p_x^2\psi)\, dxdt=0,\\
&\int_{0}^Lf(T) \psi\, dx-\int_{0}^Lf_0 \psi\, dx
+\int_{Q_T} (\p_x^2f+\p_x^2g)(\p_xg\p_x\psi +g\p_x^2\psi)\, dxdt=0
 \end{align*}
 \end{itemize}
 for all  $T>0$ and $\psi\in H^2_\Delta$.
 Furthermore, the weak solutions satisfy
\begin{align*}
 &(4)\quad \|f(T)\|_{L_1}=\|f_0\|_{L_1}\quad\text{and}\quad \|g(T)\|_{L_1}=\|g_0\|_{L_1},\\
&(5)\quad \E_2(f(T),g(T))+\int_{Q_T}(A-B)|\p_x^2f|^2+B|\p_x^2(f+g)|^2\, dx\, dt \leq \E_2(f_0,g_0)
 \end{align*}
 for  all $T\in(0,\infty),$ and 
\begin{align*}
(6)\quad \E_1(f(T),g(T))+\int_{Q_T^+}f|\p_x^3(Af+Bg)|^2+Bg|\p_x^3(f+g)|^2\, dxdt \leq \E_1(f_0,g_0)
 \end{align*}
for almost all $T\in(0,\infty)$.
\end{thm}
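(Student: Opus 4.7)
The plan is to follow a two-tier approximation scheme, exactly as sketched by the author in the introduction. First, I regularize the degenerate mobilities in \eqref{eq:S} to obtain a uniformly parabolic system and construct weak solutions of the regularized problem via Faedo--Galerkin projection onto $\mathrm{span}\{\phi_0,\dots,\phi_N\}\subset H^m_\Delta$, passing $N\to\infty$ with the help of the entropy functional $\E_2$. Then, as a second step, I send the regularization parameter $\e\to 0$ to obtain a non-negative weak solution of the original system, using $\E_1$ to control the degenerate third-order fluxes. The two energies play complementary roles: $\E_2$ furnishes $L_\infty(H^1)\cap L_2(H^2_\Delta)$ compactness and (through the convex extension of $\Phi$ below zero) forces non-negativity in the limit, while $\E_1$ supplies the extra dissipation needed to make sense of $\sqrt{f}\,\p_x^3(Af+Bg)$ and $\sqrt{g}\,\p_x^3(f+g)$ on $Q_T^+$.

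\textbf{Regularization, Galerkin, and the $\E_2$-identity.} For $\e>0$, replace $f$ and $g$ multiplying the fluxes by strictly positive mobilities $M_\e(f)$, $M_\e(g)$ (a Bernis--Friedman-type choice $M_\e(s)=s^4/(\e+|s|^3)$, extended evenly, works well because it keeps the entropy coercive and suppresses negativity), and regularize $\Phi$ so that $M_\e(s)\Phi_\e''(s)=1$ with $\Phi_\e$ convex on all of $\R$. Projecting the regularized system onto the span of $\phi_0,\dots,\phi_N$ yields an ODE for the Fourier coefficients of $(f_N^\e,g_N^\e)$, which is solvable locally in time. Testing the two projected equations with $\Phi_\e'(f_N^\e)$ and $B\Phi_\e'(g_N^\e)$, integrating by parts using the boundary conditions built into $H^m_\Delta$, and rearranging the quadratic form through the identity
\[
A\,(\p_x^2 f)^2+2B\,\p_x^2 f\,\p_x^2 g+B\,(\p_x^2 g)^2=(A-B)(\p_x^2 f)^2+B\,(\p_x^2 f+\p_x^2 g)^2,
\]
which is positive definite precisely because $A>B>0$, produces the entropy identity
\[
\frac{d}{dt}\E_2^\e(f_N^\e,g_N^\e)+\int_0^L(A-B)|\p_x^2 f_N^\e|^2+B|\p_x^2(f_N^\e+g_N^\e)|^2\,dx=0.
\]
This gives $N$-uniform $L_\infty(0,T;L_1)\cap L_2(0,T;H^2_\Delta)$ bounds on $f_N^\e$ and $g_N^\e$, hence $L_2(0,T;(H^2_\Delta)')$ bounds on $\p_t f_N^\e$ and $\p_t g_N^\e$ via the equations. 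An Aubin--Lions compactness argument then delivers strong $L_2(0,T;H^1)$ (and by embedding $C([0,T];C^\alpha)$) convergence of a subsequence, sufficient to pass $N\to\infty$ in every nonlinear term and to produce a weak solution $(f^\e,g^\e)$ of the regularized system inheriting the entropy bound.

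\textbf{Passage to the limit in $\e$.} For each $\e>0$, testing the Galerkin equations with $-\p_x^2 f_N^\e$ and $-(B/(A-B))\p_x^2(f_N^\e+g_N^\e)$ and passing $N\to\infty$ gives the $\E_1$-identity
\[
\frac{d}{dt}\E_1(f^\e,g^\e)+\frac{1}{A-B}\int_0^L M_\e(f^\e)|\p_x^3(Af^\e{+}Bg^\e)|^2+B\,M_\e(g^\e)|\p_x^3(f^\e{+}g^\e)|^2\,dx=0,
\]
so the regularized fluxes $\sqrt{M_\e(f^\e)}\,\p_x^3(Af^\e+Bg^\e)$ and $\sqrt{M_\e(g^\e)}\,\p_x^3(f^\e+g^\e)$ are bounded in $L_2(Q_T)$ uniformly in $\e$. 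Combined with the $\e$-uniform $\E_2^\e$-estimates and the time-derivative bounds, Aubin--Lions again produces a subsequence converging, in suitable norms, to $(f,g)\in L_\infty(0,T;H^1)\cap L_2(0,T;H^2_\Delta)\cap C([0,T];C^\alpha)$. Non-negativity follows because $\E_2^\e(f^\e,g^\e)\leq\E_2^\e(f_0,g_0)$ remains bounded while the convex extension of $\Phi_\e$ forces $(f^\e)_-,(g^\e)_-\to 0$ in measure. Identifying the weak limits of the third-order fluxes on $Q_T^+$ (where $M_\e(f^\e)\to f$ and $M_\e(g^\e)\to g$) and invoking lower semicontinuity then yields properties (1)--(6).

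\textbf{Main obstacle.} The principal difficulty is the strong coupling combined with the two-fold degeneracy: both equations carry third-order derivatives of both unknowns, so the dissipation from each $\E_i$ produces mixed quadratic forms rather than pure squares. Everything hinges on the algebraic miracle above, which rewrites the $\E_2$-dissipation as a positive-definite combination of $|\p_x^2 f|^2$ and $|\p_x^2(f+g)|^2$ thanks to $A>B$; without it, one could not close the a priori estimates. A secondary technical point is identifying the $\e\to 0$ limit of the fluxes on $Q_T^+$: one must show that on the positivity set the regularized mobility converges strongly enough to $f$ (resp.\ $g$) and that $\p_x^3(Af^\e+Bg^\e)$ has a well-defined weak $L_2$-limit there, which is where the simultaneous control from $\E_1$ and $\E_2$ is indispensable.
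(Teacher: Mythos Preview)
Your two-tier scheme is the right skeleton, but the order in which you deploy $\E_1$ and $\E_2$ is inverted, and this creates a genuine gap at the Galerkin stage. When you ``test the two projected equations with $\Phi_\e'(f_N^\e)$ and $B\Phi_\e'(g_N^\e)$'', the function $\Phi_\e'(f_N^\e)$ is \emph{not} in $\mathrm{span}\{\phi_0,\dots,\phi_N\}$: it is a nonlinear function of a trigonometric polynomial. Since $\p_t f_N^\e$ lies in the span, what you actually compute is $(\p_t f_N^\e,\Phi_\e'(f_N^\e))=(\p_t f_N^\e,P_N\Phi_\e'(f_N^\e))$, and on the right-hand side you obtain $\int_0^L M_\e(f_N^\e)\,\p_x^3(Af_N^\e+Bg_N^\e)\,\p_x\bigl(P_N\Phi_\e'(f_N^\e)\bigr)\,dx$. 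The projection $P_N$ destroys the pointwise cancellation $M_\e\Phi_\e''=1$, so you do \emph{not} get the clean dissipation $\int(A-B)|\p_x^2 f_N^\e|^2+B|\p_x^2(f_N^\e+g_N^\e)|^2$; you get an expression with a commutator that you have no way to sign or bound uniformly in $N$. Consequently your claimed $N$-uniform $L_2(0,T;H^2_\Delta)$ bound, and hence the whole compactness step $N\to\infty$, is unjustified. (Your choice $M_\e(s)=s^4/(\e+|s|^3)$ aggravates this: $M_\e(0)=0$, so the regularized system is still degenerate, and $\Phi_\e(0)=+\infty$, so $\Phi_\e'(f_N^\e)$ need not even be defined since the Galerkin iterates are trigonometric polynomials with no sign.)

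The paper fixes this by swapping the roles of the two energies. At the Galerkin level it uses $\E_1$, whose test functions $\p_x^2 f_\e^n$ and $\p_x^2 g_\e^n$ \emph{do} lie in $\mathrm{span}\{\phi_0,\dots,\phi_n\}$ (differentiating cosines twice returns cosines), yielding an exact identity and global existence plus the flux bounds uniformly in $n$ and $\e$. The regularization is the simple cut-off $a_\e(s)=\max\{s+\e,\e\}\ge\e$, so the $\E_1$-dissipation also gives an $n$-uniform (but not $\e$-uniform) $L_2(0,T;H^3)$ bound, enough for compactness. Only \emph{after} passing $n\to\infty$ does the paper establish the $\E_2$-inequality, via a delicate argument that the truncated Fourier series $\sum_{k\le n}(\Phi_\e'(f_\e^n)\mid\phi_k)\p_x\phi_k$ converges to $\Phi_\e''(f_\e)\p_xf_\e$ in $L_2(Q_T)$; this is where the Lipschitz bound $|\Phi_\e''|\le\e^{-1}$ is used. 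In the second limit $\e\to0$ the roles are then as you describe: $\E_2$ forces non-negativity (through the quadratic penalty $s^2/(2\e)$ in $\Phi_\e$ for $s<0$) and supplies $L_2(H^2_\Delta)$, while $\E_1$ controls the third-order fluxes on $Q_T^+$. So your ``main obstacle'' paragraph is on target, but the resolution requires using $\E_1$ first and $\E_2$ second, not the other way round.
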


We remark that since $f(t)$ and $g(t)$ belong to $H^2_\Delta$ for almost all $t>0,$ they satisfy homogeneous Neumann boundary conditions at $x=0$ and $x=L$ for  all such $t$.

The outline of the paper is as follows:
 in Section \ref{S:2} we introduce a regularized version of \eqref{eq:S} and use
 Galerkin approximations to find, in the limit, global weak solutions of this regularized problem (see Proposition \ref{P:1}).
Introducing the regularized system allows us on the one
 hand to use the energy functional $\E_1$ when dealing with the Galerkin approximations, and, on the other hand, to 
 control the solutions of the regularized problem when they become negative. 
In Section \ref{S:3} we show, by combining energy estimates for both functionals $\E_1$ and $\E_2$, that 
 the weak solutions of the regularized problem 
 converge towards non-negative global weak solutions of our original problem \eqref{eq:S}-\eqref{eq:bc2}.

%%%%%%%%%%%%%%%%%%%%%%%%%%%%%%%%%%%%%%%%%%%%%%%
%%%%%%%%%%%%%%%%%%%%%%%%%%%%%%%%%%%%%%%%%%%%%%%
\section{The regularized system}\label{S:2}
%%%%%%%%%%%%%%%%%%%%%%%%%%%%%%%%%%%%%%%%
%%%%%%%%%%%%%%%%%%%%%%%%%%%%%%%%%%%%%%%%

In order to prove the Theorem \ref{T:1} we shall  regularize system \eqref{eq:S} and use Galekin approximations to build global weak solutions   for this 
regularized problem.
These solutions are shown later on, in Section \ref{S:3}, to converge towards weak solutions of \eqref{eq:S}.
 To this end, given $\e\in(0,1],$ we define the Lipschitz continuous function $a_\e:\R\to\R$ by the relation
\begin{equation}\label{ses}
a_\e(s):=
\left\{
\begin{array}{lll}
s+\e,&s\geq0,\\
\e,&s<0.
\end{array}
\right.
\end{equation}
Furthermore, we define the convex function $\Phi_\e:\R\to\R$  with
\begin{equation}\label{eq:phi_e}
\Phi_\e(s):=
\left\{
\begin{array}{lll}
(s+\e)\ln(s+\e)-(s+\e)+1,&s\geq0,\\
\displaystyle \frac{ s^2}{2\e}+s\ln(\e)+\e\ln(\e)-\e+1,&s<0.
\end{array}
\right.
\end{equation}
Since we choose $\e\leq1,$ it is easy to  see that $\Phi_\e(s)\geq0$ for all $s\in\R$ and that $\Phi_\e''=1/a_\e.$
With this notation,  we introduce the following regularized version of our original problem \eqref{eq:S}
  \begin{equation}\label{eq:S1}
\left\{
\begin{array}{llll}
\p_t f_\e=&-\p_x\left[   a_\e(f_\e)\p_x^3\left(A f_\e+B g_\e\right)\right],\\[1ex]
\p_tg_\e=&-\p_x\left[   a_\e(g_\e)\p_x^3\left( f_\e+ g_\e\right)\right],
\end{array}
\right.
{  \quad (t,x)\in (0,\infty)\times \0.}
\end{equation}
Of course, this system is coupled with the initial and boundary conditions \eqref{eq:bc1} and \eqref{eq:bc2}. 
Compared to \eqref{eq:S}, the only difference is that we replaced at one place  $f$ and $g$ in \eqref{eq:S} by $a_\e(f)$  and $a_\e(g),$ 
respectively, and  penalize  in this way
 the functions $f_\e, g_\e$  when they take  negative values (see the definition of $a_\e$).
 Furthermore, by choosing the regularization in 
this way, we may still use the functional $\E_1$ to obtain useful estimates for the solutions of \eqref{eq:S1}. 
For the  problem consisting of \eqref{eq:S1} and \eqref{eq:bc1}-\eqref{eq:bc2} we prove the following result.

\begin{prop}\label{P:1} Let $f_0,g_0\in H^1$ be two non-negative functions and $\e\in(0,1].$
There exist globally defined functions $f_\e$ and $g_\e$ with $f_\e(0)=f_0$, $g_\e(0)=g_0$ and having the following properties:
\begin{itemize}
\item[$(i)$] Given  $T>0,$ the functions \[f_\e, g_\e\in L_\infty([0,T], H^1)\cap L_2(0,T; H^3_\Delta)\cap C([0,T], C^\alpha([0,L]))\] 
for some arbitrary $\alpha \in(0,1/2)$.
\item[$(ii)$] For all $T>0$ and $\psi\in H^1$ we have
\begin{align*}
&\int_{0}^Lf_\e(T) \psi\, dx-\int_{0}^Lf_0 \psi\, dx=\int_{Q_T} a_\e(f_\e)\p_x^3(Af_\e+Bg_\e)\p_x\psi\, dxdt,\\
&\int_{0}^Lg_\e(T) \psi\, dx-\int_{0}^Lg_0 \psi\, dx=\int_{Q_T} a_\e(g_\e)\p_x^3(f_\e+g_\e)\p_x\psi\, dxdt.
 \end{align*}
 \item[$(iii)$] The following energy estimates are satisfied:
\begin{align*}
 &(a)\quad \int_0^Lf_\e(T)\, dx=\int_0^L f_0\, dx\quad\text{and}\quad \int_0^L g_\e(T)\,dx=\int_0^L g_0\,dx,\\
 &(b)\quad \int_0^L \Phi_\e(f_\e(T))+B\Phi_\e(g_\e(T))\, dx\\
&\hspace{1.5cm}+\int_{Q_T}(A-B)|\p_x^2f_\e|^2+B|\p_x^2(f_\e+g_\e)|^2\, dx\, dt \\
&\hspace{3cm}\leq \int_0^L \Phi_\e(f_0)+B\Phi_\e(g_0)\, dx\\
 \end{align*}
 
\vspace{-0.5cm}
 for  all $T\in[0,\infty),$ and 
  \begin{align*}
&(c)\quad \E_1(f_\e(T),g_\e(T))\\
&\hspace{1.5cm}+\frac{1}{A-B}\int_{Q_T}a_\e(f_\e)|\p_x^3(Af_\e+Bg_\e)|^2+Ba_\e(g_\e)|\p_x^3(f_\e+g_\e)|^2\, dxdt\\[1ex]
&\hspace{3cm}\leq \E_1(f_0,g_0)\qquad \text{for  almost all $T\in(0,\infty).$}
 \end{align*}
\end{itemize} 
\end{prop}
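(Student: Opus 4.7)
The plan is a Galerkin approximation in the Neumann--Laplacian eigenbasis $\{\phi_k\}$. For each $n\in\N$, set $H_n:=\mathrm{span}\{\phi_0,\ldots,\phi_n\}$ and seek $f_n,g_n\in C^1([0,T_n);H_n)$ solving the projection of \eqref{eq:S1} tested against every $\phi_j$ with $j\leq n$, subject to initial data $f_n(0)=P_nf_0$, $g_n(0)=P_ng_0$, where $P_n$ is the $L_2$-orthogonal projector onto $H_n$. Since $\p_x\phi_k=\p_x^3\phi_k=0$ at $x=0,L$, every integration by parts is free of boundary terms, and the resulting ODE for the Fourier coefficients has a locally Lipschitz right-hand side (because $a_\e\geq\e>0$ and $a_\e$ is Lipschitz). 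Picard--Lindel\"of supplies a unique local solution.

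The decisive a priori bound is the $\E_1$-energy identity at the Galerkin level. Using $-\p_x^2 f_n\in H_n$ as test function in the $f_n$-equation, forming the analogous combination with $-(B/(A-B))\p_x^2(f_n+g_n)\in H_n$, and exploiting the algebraic identity $(A-B)\p_x^3 f_n=\p_x^3(Af_n+Bg_n)-B\p_x^3(f_n+g_n)$, cancellation produces
\[
\frac{d}{dt}\E_1(f_n,g_n)+\frac{1}{A-B}\int_0^L a_\e(f_n)|\p_x^3(Af_n+Bg_n)|^2+Ba_\e(g_n)|\p_x^3(f_n+g_n)|^2\,dx=0.
\]
Since $a_\e\geq\e$, this gives uniform bounds on $(f_n,g_n)$ in $L_\infty(0,T;H^1)\cap L_2(0,T;H^3_\Delta)$ (the latter because the linear map $(f,g)\mapsto(Af+Bg,f+g)$ has determinant $A-B>0$), hence global existence of the Galerkin solutions. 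Mass conservation $(a)$ comes from testing with $\phi_0$.

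Next, one extracts $\p_tf_n,\p_tg_n$ bounds in $L_2(0,T;(H^1)^\ast)$ directly from the equations and invokes Aubin--Lions to find a subsequence $(f_n,g_n)$ converging strongly in $L_2(0,T;H^2)$ and, via the compact embedding $H^1\hookrightarrow C^\alpha([0,L])$ for $\alpha<1/2$, in $C([0,T];C^\alpha([0,L]))$. Because $a_\e$ is bounded and Lipschitz, $a_\e(f_n)\to a_\e(f_\e)$ strongly in every $L_p(Q_T)$, so the nonlinear product $a_\e(f_n)\p_x^3(Af_n+Bg_n)$ passes to the limit weakly in $L_2(Q_T)$ against any test in $L_2(0,T;H^1)$, yielding (i) and (ii). Estimate $(c)$ then follows from weak lower semicontinuity of $v\mapsto\int v^2\,dxdt$ applied to $\sqrt{a_\e(f_n)}\p_x^3(Af_n+Bg_n)$, whose $L_2(Q_T)$-weak limit is $\sqrt{a_\e(f_\e)}\p_x^3(Af_\e+Bg_\e)$ by strong times weak.

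The main obstacle is $(b)$: since $\Phi_\e'(f_n)$ is generally not in $H_n$, this entropy estimate cannot be derived at the Galerkin level. Instead, I would test the limiting weak formulation (ii) with $\Phi_\e'(f_\e)$ and $B\Phi_\e'(g_\e)$, which are admissible because $f_\e,g_\e\in L_\infty(0,T;H^1)\cap L_2(0,T;H^2_\Delta)$ combined with the boundedness of $\Phi_\e''=1/a_\e$ places $\Phi_\e'(f_\e),\Phi_\e'(g_\e)$ in $L_2(0,T;H^2_\Delta)$. Using $\Phi_\e''a_\e=1$, one IBP collapses $\int a_\e(f_\e)\p_x^3(Af_\e+Bg_\e)\p_x\Phi_\e'(f_\e)$ to $\int\p_x^3(Af_\e+Bg_\e)\p_xf_\e$, and a further IBP (boundary terms vanish because $\p_xf_\e=0$ a.e.\ at $x=0,L$) together with the analogous computation for the $g$-equation produces the bilinear form $-(A-B)\|\p_x^2f_\e\|_2^2-B\|\p_x^2(f_\e+g_\e)\|_2^2$. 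Integrating in time yields $(b)$, once the chain rule $\frac{d}{dt}\int\Phi_\e(f_\e)=\int\Phi_\e'(f_\e)\p_tf_\e$ is justified by a standard smoothing of $\Phi_\e$ together with $\p_tf_\e\in L_2(0,T;(H^2_\Delta)^\ast)$; this regularisation step is the principal technical hurdle.
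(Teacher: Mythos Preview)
Your argument for $(i)$, $(ii)$, $(iii)(a)$ and $(iii)(c)$ is essentially the paper's: Galerkin approximation in the cosine basis, the $\E_1$-identity for global existence and uniform bounds, Aubin--Lions--Simon compactness, and weak lower semicontinuity for the dissipation terms. The paper makes the same choices and the same identifications of weak limits.

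The genuine divergence is in the derivation of $(iii)(b)$. You correctly diagnose that $\Phi_\e'(f_n)\notin H_n$ and therefore propose to bypass the Galerkin level entirely: first pass to the limit to obtain $(ii)$, then test the limit equation with the time-dependent function $\Phi_\e'(f_\e)$, invoking a chain rule of the type $\frac{d}{dt}\int\Phi_\e(f_\e)=\langle\p_tf_\e,\Phi_\e'(f_\e)\rangle_{(H^1)',H^1}$. This is a legitimate route, and the algebra you sketch (the cancellation $a_\e\Phi_\e''=1$ followed by one more integration by parts) is exactly the right one. Two small corrections: boundedness of $\Phi_\e''$ only puts $\Phi_\e'(f_\e)$ in $L_\infty(0,T;H^1)$, not $L_2(0,T;H^2_\Delta)$, but $H^1$ is all you need since $(ii)$ is stated for $\psi\in H^1$; and the chain rule you flag as ``the principal technical hurdle'' is indeed the crux---it is a standard lemma for convex $\Phi$ with Lipschitz derivative when $f\in L_2(0,T;H^1)$ and $\p_tf\in L_2(0,T;(H^1)')$, but it does require a separate proof or citation.

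The paper takes the complementary route: it stays at the Galerkin level, where the chain rule is classical because $f_\e^n\in C^1([0,\infty);H_n)$, and uses the \emph{projected} test function $P_n\Phi_\e'(f_\e^n)=\sum_{k\le n}(\Phi_\e'(f_\e^n)\mid\phi_k)\phi_k$, which is admissible since $\p_tf_\e^n\in H_n$ makes the pairing with $\Phi_\e'(f_\e^n)$ and with its projection coincide. The price is that one must then show $\p_xP_n\Phi_\e'(f_\e^n)\to\Phi_\e''(f_\e)\p_xf_\e$ strongly in $L_2(Q_T)$ in order to pass to the limit; the paper does this by splitting off the projection error and using that $\Phi_\e''$ is Lipschitz with constant $\e^{-2}$ and bounded by $\e^{-1}$. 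So the trade-off is: your approach front-loads the difficulty into an abstract chain-rule lemma for the limit, while the paper keeps the chain rule elementary but pays with a concrete convergence estimate for the truncated Fourier series of $\Phi_\e'(f_\e^n)$.
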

We will construct the global solutions of \eqref{eq:S1} by using  Galerkin's method.
In a first step we will find, by using the Picard-Lindel\"of theorem, 
Galerkin approximations for the solutions of \eqref{eq:S1} which are defined on a positive time interval.
Using the energy functional $\E_1$, we show then that in fact the approximations are defined globally.
In a second step, we prove that the Galerkin approximation converge  towards global solutions of the regularized system 
which satisfy energy inequalities  for both energy functionals $\E_1$ and $\E_2.$
Though $f_0$ and $g_0$ are non-negative, it is not clear whether $f_\e$ and $g_\e$ preserve this property in time.
However, we will show later on, in Section \ref{S:3}, that, for $\e\to0,$ $f_\e$ and $g_\e$ converge uniformly to non-negative functions.

\vspace{0.2cm}
\subsection{\bf Global existence of  the Galerkin approximations}
$ $ \vspace{0.2cm}

 Given $f_0, g_0$  in $H^1$, the initial conditions of \eqref{eq:S}, we consider their expansions 
\[
f_0=\sum_{k=0}^\infty f_{0k}\phi_k, \qquad g_0=\sum_{k=0}^\infty g_{0k}\phi_k\qquad\text{in $H^1$,}
\]  
and, for each $n\in\N$, the partial sums 
 \[
f_0^n:=\sum_{k=0}^n f_{0k}\phi_k, \qquad g_0^n:=\sum_{k=0}^n g_{0k}\phi_k.
\] 
 We first seek  continuously differentiable  functions  
  \[
f_\e^n:=\sum_{k=0}^n F_\e^k(t)\phi_k, \qquad g_\e^n:=\sum_{k=0}^n G_\e^k(t)\phi_k
\]
 which solve  \eqref{eq:S1} when testing with functions from the vector space $\langle\phi_0,\ldots,\phi_n\rangle,$ and additionally 
 \[
f_\e^n(0)=f_{0}^n,\qquad g_\e^n(0)=g_{0}^n.
\]
By construction, the functions $(f_\e^n,g_\e^n)$ satisfy the boundary conditions \eqref{eq:bc2} and, if we 
test \eqref{eq:S1} with constant functions, it follows at once that necessarily $F_\e^0$ and $G_\e^0$ are constant functions
\begin{equation}\label{eq:00}
F_\e^0(t)=f_{00},\qquad G_\e^0(t)=g_{00},\qquad t\geq0.
\end{equation}
Moreover, the tuple $(\vec F_\e^n,\vec G_\e^n):=(F_\e^1,\ldots,F_\e^n,G_\e^1,\ldots,G_\e^n)$
is the solution of the initial value problem
\begin{equation}\label{eq:IVP}
(\vec F_\e^n,\vec G_\e^n)'=\Psi(\vec F_\e^n,\vec G_\e^n),\qquad (\vec F_\e^n,\vec G_\e^n)(0)=(f_{01},\ldots,f_{0n},g_{01},\ldots,g_{0n}),
\end{equation}
 where $\Psi:=(\Psi_1,\Psi_2):\R^{2n}\to\R^{2n}$
 is  given by
\begin{align*}
\Psi_{1,j}(x,y)=&\sum_{k=1}^n (Ax_k+By_k)\int_0^La_\e\left(f_{00}\phi_0+\sum_{l=1}^nx_l\phi_l\right)\p_x^3\phi_k\p_x\phi_j\, dx\\
\Psi_{2,j}(x,y)=&\sum_{k=1}^n(x_k+y_k)\int_0^La_\e\left(g_{00}\phi_0+\sum_{l=1}^ny_l\phi_l\right)\p_x^3\phi_k\p_x\phi_j\, dx,
 \end{align*}
 for all $x,y\in\R^n.$
 Since $a_\e$ is Lipschitz continuous, we deduce that $\Psi$ is locally Lipschitz continuous on $\R^{2n},$ 
and therefore problem \eqref{eq:IVP} possesses a unique solution $(\vec F_\e^n,\vec G_\e^n)$ 
defined on a maximal interval $[0,T_\e^n)$.
 In order to prove that the solution is global, that is $T_\e^n=\infty$ for all $\e\in(0,1]$ and $n\in\N,$ we make use of the energy functional $\E_1$.
 Indeed, since $\p_x^2 f_\e^n,\p_x^2 g_\e^n\in\langle\phi_0,\ldots,\phi_n\rangle,$  we may use them as test functions for \eqref{eq:S1}. 
Integrating by parts,  we then get the following relation
 \begin{equation}\label{E:31}
\begin{aligned}
 &\frac{d}{dt}\E_1(f_\e^n,g_\e^n)\\
&\hspace{0.5cm}=\frac{1}{A-B}\int_0^L  A\p_x f^n_\e\p_t(\p_xf^n_\e)+B\p_x f^n_\e\p_t(\p_xg^n_\e)\\
&\hspace{3cm}+B\p_x g^n_\e\p_t(\p_xf^n_\e)+B\p_x g^n_\e\p_t(\p_xg^n_\e)\,dx\\
 &\hspace{0.5cm}=-\frac{1}{A-B}\int_0^L  A\p_x^2 f^n_\e\p_tf^n_\e+B\left[\p_x^2 f^n_\e\p_tg^n_\e\, dx+\p_x^2 g^n_\e\p_tg^n_\e+\p_x^2 g^n_\e\p_tf^n_\e\right]\,dx\\
 &\hspace{0.5cm}=-\frac{1}{A-B}\int_0^L \left[ Aa_\e(f_\e^n)\p_x^3 f^n_\e\p_x^3 (Af^n_\e+Bg_\e^n)+Ba_\e(g_\e^n)\p_x^3 f^n_\e\p_x^3 (f^n_\e+g_\e^n)\right.\\
&\hspace{3cm}+\left.Ba_\e(f_\e^n)\p_x^3 g^n_\e\p_x^3 (Af^n_\e+Bg_\e^n)+Ba_\e(g_\e^n)\p_x^3 g^n_\e\p_x^3 (f^n_\e+g_\e^n)\right]\, dx,
 \end{aligned}
 \end{equation}
 and taking into account that $\E_1(f_0^n,g_0^n)\leq \E_1(f_0,g_0)$ for all $n\in\N$, we  find after  integrating with respect to time that
\begin{align}
&\E_1(f_\e^n(T),g_\e^n(T))+\frac{1}{A-B}\int_{Q_T}a_\e(f^n_\e)|\p_x^3(Af^n_\e+Bg^n_\e)|^2\nonumber\\
&\hspace{5.2cm}+Ba_\e(g^n_\e)|\p_x^3(f^n_\e+g^n_\e)|^2\, dx\, dt\leq \E_1(f_0,g_0)\label{eq:EE}
 \end{align}
 for all $T>0$.
 Whence,  there exists a positive  constant $C$, which is independent of time, such that
$|(\vec F_\e^n(T),\vec G_\e^n(T))|<C$ for all $T<T_\e^n.$ 
Together with \eqref{eq:00}, we conclude that for each $n\in\N$ and $\e\in(0,1]$, the Galerkin approximations $(f_\e^n,g_\e^n)$ are defined globally.

 \vspace{0.2cm}
\subsection{\bf Convergence of the Galerkin approximations}
$ $ \vspace{0.2cm}

 Let $T>0$ and $\e\in(0,1]$ be fixed. 
 From the energy estimate \eqref{eq:EE}  we deduce that
 \begin{align}\label{E:1}
 \p_xf_\e^n,\  \p_xg_\e^n \quad \text{are  bounded in $L_\infty(0,T; L_2)$,}\\
\label{E:2}
 \sqrt{a_\e(f^n_\e)}\p_x^3(Af^n_\e+Bg^n_\e),\ \sqrt{a_\e(g^n_\e)}\p_x^3(f^n_\e+g^n_\e)\quad \text{are  bounded in $L_2(Q_T)$,}
 \end{align}
 uniformly in $n\in\N$ and $\e\in(0,1]$.
In view of $a_\e\geq\e$ and  $A>B$, we obtain from \eqref{E:2} that 
 \begin{align}
 \label{E:3}
 \p_x^3f_\e^n, \ \p_x^3g_\e^n &\quad \text{are  bounded in $L_2(Q_T)$,}
 \end{align}
 uniformly in $n\in\N$. 
Furthermore, by virtue  of \eqref{eq:00}, we see that the mass of both  fluids is preserved  by the Galerkin approximations
 \begin{align}\label{E:4}
 \int_0^Lf_\e^n(t)\, dx=\int_0^L f_0\, dx\quad\text{and}\quad \int_0^L g_\e^n(t)\,dx=\int_0^L g_0\,dx\quad\text{for all $t\in[0,T].$}
 \end{align}
 
 Invoking now \eqref{E:1}, \eqref{E:4},  and the  Poincar\'e-Wirtinger inequality  we conclude that in fact
 \begin{align}\label{E:5}
 f_\e^n,\  g_\e^n &\quad \text{are  bounded in $ L_\infty(0,T; H^1)$ uniformly in $\e\in(0,1]$ and $n\in\N$,}
\end{align}
while,  owing to  \eqref{E:3} and  \eqref{E:4}, the same inequality   implies
\begin{align}\label{E:6}
 f_\e^n,\  g_\e^n &\quad \text{are  bounded in $ L_2(0,T; H^3)$ uniformly in $n$.}
\end{align}
 We consider now the partial derivatives with respect to time, and observe that the first equation of \eqref{eq:S1}  can be written in the more compact form
 $\p_tf_\e^n=-\p_xH^n_\e$
 where, by \eqref{E:2},  \eqref{E:5}, and using the embedding $H^1\hookrightarrow L_\infty,$ the right-hand side 
$H^n_\e:= a(f^\e_n)(A\p_x^3f_\e^n+B\p_x^3g_\e^n)$ is  bounded in $L_2(Q_T)$ uniformly in $\e$ and $n$.
 Therefore, given $\zeta\in H^1,$ we set $$\zeta^n:=\sum_{k=0}^n(\zeta|\phi_k)\phi_k$$ and, using  integration by parts, obtain
 \begin{align*}|(\p_tf^n_\e(t)|\zeta)|=&|(\p_tf^n_\e(t)|\zeta^n)|\\
=&|( H^n_\e|\p_x\zeta_n)|\\
\leq& \|H^n_\e\|_{L_2(Q_T)}\|\zeta^n\|_{H^1}\\
\leq &\|H^n_\e\|_{L_2(Q_T)}\|\zeta\|_{H^1}.\end{align*}
 This means that
  \begin{align}\label{E:7}
 \p_tf_\e^n,\  \p_tg_\e^n &\quad \text{are  bounded in $L_2(0,T;  (H^1)')$ uniformly in $\e$ and $n$.}
\end{align}
 Gathering \eqref{E:5}-\eqref{E:7}, we obtain from Corollary 4 in \cite{JS}, by making also use of the 
embeddings \[\text{$H^1\overset{comp.}\hookrightarrow C^\alpha([0,L])\hookrightarrow (H^1)'$ \, and \,
  $H^3\overset{comp.}\hookrightarrow C^{2+\alpha}([0,L])\hookrightarrow (H^1)'$}\] for $\alpha\in[0,1/2)$, that 
 \[
f_\e^n,\  g_\e^n \quad \text{are relatively compact in $C([0,T], C^\alpha([0,L]))\cap L_2(0,T;C^{2+\alpha}([0,L]))$.}
\]
Whence, for each $\e\in(0,1],$ there exist functions 
\[f_\e, g_\e\in C([0,T], C^\alpha([0,L]))\cap L_2(0,T;C^{2+\alpha}([0,L]))\] 
and subsequences of $(f_\e^n)$ and $(g_\e^n)$ (which we denote again by  $(f_\e^n)$ and $(g_\e^n)$) 
 such that
 \begin{equation}\label{F:1}
f_\e^n\to f_\e\quad\text{and}\quad  g_\e^n\to g_\e\quad \text{ in $C([0,T], C^\alpha([0,L]))\cap L_2(0,T;C^{2+\alpha}([0,L]))$.}
\end{equation}
Moreover,    we deduce from \eqref{E:6} that
 \begin{equation}\label{F:2}
\p_x^p f_{\e}^n\rightharpoonup  \p_x^pf_\e\quad\text{and}\quad  \p_x^p g_{\e}^n\rightharpoonup  \p_x^pg_\e\quad \text{ in $L_2(Q_T)$ for $p=1,2,3,$} 
\end{equation}
and therefore   $f_\e,  g_\e\in L_2([0,T], H^3)$.
Additionally, since  $f_\e^n(t), g_\e^n(t)\in H^3_\Delta$, we get, by virtue of \eqref{F:1}, that   $\p_xf_\e=\p_x g_\e=0$   at $x=0,L$ for 
almost all $t\in[0,T],$
which yields $f_\e,  g_\e\in L_2([0,T], H^3_\Delta)$. \vspace{0.2cm}

\subsection{\bf Proof of Proposition \ref{P:1}}
$ $ \vspace{0.2cm}

 First of all,   $f_\e^n(0)=f_0^n$ for all $n\in\N$ and since
$f_0\in H^1$ 
we conclude that    $f_\e(0)=f_0 $ for all $\e\in(0,1].$
Similarly, we have $g_\e(0)=g_0$ for all $\e\in(0,1].$
Furthermore, it is clear from \eqref{E:4} and \eqref{F:1} that the weak solutions $(f_\e,g_\e)$ satisfy the relation $(iii)(a)$  of Proposition \ref{P:1}.

We pass now to the limit in the energy estimate \eqref{eq:EE}.
By virtue of \eqref{E:2}, \eqref{F:1}, and \eqref{F:2} we have
\[
\begin{array}{lll}
&\sqrt{a_\e(f^n_\e)}\p_x^3(Af^n_\e+Bg^n_\e)\rightharpoonup \sqrt{a_\e(f_\e)}\p_x^3(Af_\e+Bg_\e),\\[1ex]
&\sqrt{a_\e(g^n_\e)}\p_x^3(f^n_\e+g^n_\e)\rightharpoonup \sqrt{a_\e(g_\e)}\p_x^3(f_\e+g_\e)
\end{array}\quad\text{ in $L_2(Q_T)$.}
\]
Furthermore, by \eqref{F:1} we know that $f_\e^n(t)\to f_\e(t)$ in $H^1$ for almost all $t\in[0,T],$ so that, by
passing to the limit $n\to\infty$ in \eqref{eq:EE}, we obtain the estimate $(iii)(c)$ of Proposition \ref{P:1}.

Claim $(i)$ of Proposition \ref{P:1} is now a simple consequence of the assertions  $(iii)(a)$ and $(iii)(c)$ of the same proposition.

We  now prove the assertion $(ii)$ of Proposition \ref{P:1}.
To this end, we pick an arbitrary function $\psi\in H^1$ and, testing \eqref{eq:S1} with $\psi^n:=\sum_{k=0}^n(\psi|\phi_k)\phi_k,$ we obtain the following relations
 \begin{align*}
&\int_0^Lf^n_\e(T) \psi^n\, dx-\int_0^Lf_{0n} \psi^n\, dx=\int_{Q_T} a_\e(f_\e^n)\p_x^3(Af_\e^n+Bg_\e^n)\p_x\psi^n\, dxdt,\\
&\int_0^Lg^n_\e(T) \psi^n\, dx-\int_0^Lg_{0n} \psi^n\, dx=\int_{Q_T} a_\e(g_\e^n)\p_x^3(f_\e^n+g_\e^n)\p_x\psi^n\, dxdt.
 \end{align*}
 Invoking \eqref{E:2} and \eqref{E:5}, we see that $a_\e(f_\e^n)\p_x^3(Af_\e^n+Bg_\e^n)$ and $a_\e(g_\e^n)\p_x^3(f_\e^n+g_\e^n)$ are bounded 
in $L_2(Q_T)$ uniformly in $\e$ and $n$.
Using    \eqref{F:1} and \eqref{F:2}, 
we may even identify their weak limit 
\begin{equation}\label{eq:QE}
\begin{aligned}
&a_\e(f^n_\e)\p_x^3(Af^n_\e+Bg^n_\e)\rightharpoonup a_\e(f_\e)\p_x^3(Af_\e+Bg_\e),\\[1ex]
&a_\e(g^n_\e)\p_x^3(f^n_\e+g^n_\e)\rightharpoonup a_\e(g_\e)\p_x^3(f_\e+g_\e)
\end{aligned}\quad\text{ in $L_2(Q_T)$,}
\end{equation}
and the assertion  $(ii)$ of Proposition \ref{P:1} follows from the previous identities when letting $n\to\infty$.

We end this paragraph with the proof of the estimate $(iii)(b)$ of Proposition \ref{P:1}. 
Let us observe that  $\Phi_\e'(f_\e^n(t))$  and  $\Phi_\e'(f_\e(t))$  belong to $H^1$ for almost all $t\in[0,T],$
meaning that
\begin{equation}\label{eq:FA}
\Phi_\e'(f_\e^n(t))=\sum_{k=0}^\infty(\Phi_\e'(f_\e^n(t))| \phi_k)\phi_k, \qquad
 \Phi_\e'(f_\e(t))=\sum_{k=0}^\infty(\Phi_\e'(f_\e(t))| \phi_k)\phi_k\quad\text{in $H^1$}
\end{equation}
 for almost all $t\in[0,T].$
 Of course, \eqref{eq:FA} is  also valid  when replacing $f$ by $g$.
In view of \eqref{eq:FA}, we obtain the following relations
  \begin{equation}\label{E:32}
\begin{aligned}
\frac{d}{dt}\int_0^L \Phi_\e(f_\e^n)+B\Phi_\e(g_\e^n)\, dx=&\int_0^L \Phi_\e'(f_\e^n)\p_tf_\e^n+B\Phi_\e'(g_\e^n)\p_tg_\e^n\, dx\\
=&\int_0^L a_\e(f_\e^n)\p_x^3(Af_\e^n+Bg_\e^n)\sum_{k=0}^n(\Phi_\e'(f_\e^n)|\phi_k)\p_x\phi_k \\
&+Ba_\e(g_\e^n)\p_x^3(f_\e^n+g_\e^n)\sum_{k=0}^n(\Phi_\e'(g_\e^n)|\phi_k)\p_x\phi_k \, dx,
\end{aligned}
\end{equation}
and, integrating  with respect to time,  we arrive at
\begin{equation}\label{eq:qe}
\begin{aligned}
&\int_0^L \Phi_\e(f_\e^n(T))+B\Phi_\e(g_\e^n(T))\, dx\\
&\hspace{1.5cm}=\int_{Q_T} \left[a_\e(f_\e^n)(A\p_x^3f_\e^n+B\p_x^3g_\e^n)\sum_{k=0}^n(\Phi_\e'(f_\e^n)|\phi_k)\p_x\phi_k  \right.\\
&\hspace{3cm}+\left.Ba_\e(g_\e^n)(\p_x^3f_\e^n+\p_x^3g_\e^n)\sum_{k=0}^n(\Phi_\e'(g_\e^n)|\phi_k)\p_x\phi_k \right] dx dt\\
&\hspace{2cm}+\int_0^L \Phi_\e(f_0^n)+B\Phi_\e(g_0^n)\, dx.
\end{aligned}
\end{equation}
In order to pass to the limit $n\to\infty$ in relation \eqref{eq:qe} we have to determine what happens  with the 
two integrals on the right-hand side of \eqref{eq:qe}.
Using \eqref{eq:FA}, we have
\begin{align*}
&\left\|\sum_{k=0}^n(\Phi_\e'(f_\e^n)|\phi_k)\p_x\phi_k-\Phi''_\e(f_\e)\p_xf_\e\right\|^2_{L_2(Q_T)}\\
&\hspace{1.5cm}\leq2\left\|\sum_{k=0}^n(\Phi_\e'(f_\e^n)-\Phi_\e'(f_\e)|\phi_k)\p_x\phi_k\right\|^2_{L_2(Q_T)}\\
&\hspace{2cm}+2\left\|\sum_{k=0}^n(\Phi_\e'(f_\e)|\phi_k)\p_x\phi_k-\Phi''_\e(f_\e)\p_xf_\e\right\|^2_{L_2(Q_T)}.
\end{align*}
Taking into account that the first sum on the right-hand side of the latter inequality is the truncation of the Fourier series of 
$\Phi''_\e(f_\e^n)\p_xf_\e^n-\Phi''_\e(f_\e)\p_xf_\e$, cf. \eqref{eq:FA}, its norm may be estimated as follows 
\begin{align*}
 \left\|\sum_{k=0}^n(\Phi_\e'(f_\e^n)-\Phi_\e'(f_\e)|\phi_k)\p_x\phi_k\right\|^2_{L_2(Q_T)}
\leq&\|\Phi''_\e(f_\e^n)\p_xf_\e^n-\Phi''_\e(f_\e)\p_xf_\e\|^2_{L_2(Q_T)}\\
 \leq&2\|\Phi''_\e(f_\e^n)-\Phi''_\e(f_\e)\|_{L_\infty(Q_T)}^2\|\p_xf_\e\|^2_{L_2(Q_T)}\\
&+2\|\Phi''_\e(f_\e^n)\|_{L_\infty(Q_T)}^2\|\p_xf_\e^n-\p_xf_\e\|^2_{L_2(Q_T)}\\
 \leq&2\e^{-4}\|f_\e^n-f_\e\|_{L_\infty(Q_T)}^2\|\p_xf_\e\|^2_{L_2(Q_T)}\\
&+2\e^{-2}\|\p_xf_\e^n-\p_xf_\e\|^2_{L_2(Q_T)}.
\end{align*}
We note that the last inequality has been obtained by using the fact that $\Phi''_\e $ is Lipschitz continuous with Lipschitz constant $\e^{-2}$ and
 $0\leq \Phi_\e''\leq \e^{-1}$, 
properties which readily follow  from  \eqref{ses},
\eqref{eq:phi_e}, and the relation $\Phi_\e''=1/a_\e$.  
Invoking \eqref{F:1}, we resume our calculation with
\begin{equation}\label{A0}
\left\|\sum_{k=0}^n(\Phi_\e'(f_\e^n)-\Phi_\e'(f_\e)|\phi_k)\p_x\phi_k\right\|^2_{L_2(Q_T)}\to_{n\to\infty}0.
\end{equation}
Concerning  the second term, we obtain from \eqref{eq:FA} that
\begin{equation*}
\left\|\sum_{k=0}^n(\Phi_\e'(f_\e)|\phi_k)\p_x\phi_k-\Phi''_\e(f_\e)\p_xf_\e\right\|_{L_2}
=\left\|\sum_{k=n+1}^\infty(\Phi_\e'(f_\e)|\phi_k)\p_x\phi_k\right\|_{L_2}\searrow _{n\to\infty}0
\end{equation*}
for almost all $t\in[0,T],$ and Lebesgue's dominated convergence theorem yields 
\begin{equation}\label{A3}
\left\|\sum_{k=0}^n(\Phi_\e'(f_\e^n)|\phi_k)\p_x\phi_k-\Phi''_\e(f_\e)\p_xf_\e\right\|_{L_2(Q_T)}\to_{n\to\infty}0.
\end{equation}
Gathering \eqref{A0} and \eqref{A3}, we conclude that
\begin{equation}\label{A4}
\sum_{k=0}^n(\Phi_\e'(f_\e^n)|\phi_k)\p_x\phi_k\to_{n\to \infty}\Phi''_\e(f_\e)\p_xf_\e\qquad\text{in $L_2(Q_T)$.}
\end{equation}
Clearly, \eqref{A4} remains true if we replace $f$ by $g.$
We sum   \eqref{eq:QE}, \eqref{A4}, use \eqref{F:1} and the fact that both $f_0$ and $g_0$ 
are non-negative to obtain from \eqref{eq:qe}, when letting $n\to\infty$, the desired assertion $(iii)(b)$ of Proposition \ref{P:1}.

\section{The proof of Theorem \ref{T:1}}\label{S:3}
We shall use the global weak solutions $(f_\e,g_\e)$ of the regularized problem \eqref{eq:S1} to find,
 in the limit $\e\to 0$, global weak solutions of our original system \eqref{eq:S}.
The key role is now played   by the second energy functional $\E_2$, which will be used to prove that the weak solutions we obtain are non-negative
 and to identify in $L_2(0,T;H^2)$ a weak limit of the global solutions of \eqref{eq:S1}.
Using  integration by parts, we may eliminate then from the right-hand side of $(ii)$ Proposition \ref{P:1} the
 third order derivatives of $f_\e$ and $g_\e$, for which we don't have any kind of uniform bounds, and obtain in the limit $\e\to0$ 
the assertion $(3)$ of Theorem \ref{T:1}. 

To do so, we collect first some estimates for the family $(f_\e,g_\e)$ which have been already established in Section \ref{S:2}.
We have to  pay attention because some of the estimates proven before are uniform only with respect to $n$, and of no use in this final part. 
Invoking \eqref{E:2} and \eqref{E:5},   we deduce the uniform boundedness of
\begin{align}\label{B:1}
 f_\e,\  g_\e \quad \text{  in $L_\infty(0,T; H^1)$,}\\
 \label{B:3}
  \sqrt{a_\e(f_\e)}\p_x^3(Af_\e+Bg_\e), \ \sqrt{a_\e(g_\e)}\p_x^3(f_\e+g_\e)\quad \text{ in $L_2(Q_T)$,}
 \end{align}
 while, by virtue of Proposition $(iii) (a)$ and $(b),$
 \begin{align}
 \label{B:2}
 &\p_x^2f_\e, \ \p_x^2 g_\e \quad \text{are uniformly bounded in $L_2(Q_T)$,}\\
 \label{B:5}
  &\int_0^Lf_\e(T)\, dx=\int_0^L f_0\, dx,\,\quad  \int_0^L g_\e(T)\,dx=\int_0^L g_0\,dx,
 \end{align}
 for all $T>0.$
Lastly, we observe that the estimates   \eqref{E:5} and \eqref{E:7} are both uniform with respect to $\e\in(0,1]$ and $n\in\N.$
This implies that the families $\{f_\e^n\,:\, \e\in(0,1], \ n\in\N\}$ and  $\{g_\e^n\,:\, \e\in(0,1], \ n\in\N\}$ are both relatively 
compact in $C([0,T], C^\alpha([0,L]))$, if $\alpha\in[0,1/2),$ and therefore 
\begin{align}
 \label{B:4}
 &(f_\e), (g_\e) \quad \text{are  relatively compact in $C([0,T], C^\alpha([0,L]))$.}
 \end{align}
Consequently,  there exist subsequences $(f_{\e_k})$ and $(g_{\e_k})$ and functions $f, g$ such that
\begin{equation}\label{B:7}
f_{\e_k}\to f\quad\text{and}\quad  g_{\e_k}\to g\quad \text{ in $C([0,T], C^\alpha([0,L]))$,}
\end{equation}
while, owing to  \eqref{B:1}, \eqref{B:2}, we conclude that $f_\e, g_\e$ are bounded in $ L_2(0,T;H^2),$ 
which ensures, after possibly extracting further subsequences, weak convergence in $L_2(Q_T)$ of the spatial derivatives  up to order 2
\begin{equation}\label{B:8}
\p_x^p f_{\e_k}\rightharpoonup  \p_x^pf\quad\text{and}\quad  \p_x^p g_{\e_k}\rightharpoonup  \p_x^pg\quad \text{ in $L_2(Q_T)$ for $p=1,2.$}
\end{equation}
Recalling Proposition \ref{P:1} $(i)$ and \eqref{B:7}, we deduce that  $f, g \in L_2(0,T;H^2_\Delta)$ for all $T>0.$
Moreover, the sequences $(f_{\e_k})$ and $(g_{\e_k})$ converge strongly towards  $f$ and $ g$, respectively, in a different norm than in \eqref{B:7}.

\begin{lemma}\label{L:2}  Given $T>0,$ we have:
\begin{equation}\label{e:BB}
f_{\e_k}\to f\qquad\text{and}\qquad g_{\e_k}\to g \qquad\text {in $L_4(0,T;H^1).$}
\end{equation}
\end{lemma}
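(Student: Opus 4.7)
The plan is to upgrade the modes of convergence available in \eqref{B:7} and \eqref{B:8} to strong convergence in $L_4(0,T;H^1)$ by means of a one-dimensional Gagliardo--Nirenberg style interpolation. I carry out the argument for $(f_{\e_k})$; the case of $(g_{\e_k})$ is entirely analogous. The strategy is to interpolate the uniform $C^\alpha$-convergence of \eqref{B:7} against the uniform $L_2(0,T;H^2)$-bound supplied by \eqref{B:1}--\eqref{B:2}.

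Set $u_k(t):=f_{\e_k}(t)-f(t)$. From Proposition \ref{P:1}$(i)$ and the observation preceding the statement of the lemma both $f_{\e_k}(t)$ and $f(t)$ lie in $H^2_\Delta$ for almost every $t\in(0,T)$, so $\p_x u_k(t)$ vanishes at $x=0,L$ for such $t$. A single integration by parts combined with Cauchy--Schwarz then yields
\[
\|\p_x u_k(t)\|_{L_2}^2=-\int_0^L u_k(t)\,\p_x^2 u_k(t)\,dx\leq \|u_k(t)\|_{L_2}\|\p_x^2 u_k(t)\|_{L_2},
\]
hence $\|\p_x u_k(t)\|_{L_2}^4\leq \|u_k(t)\|_{L_2}^2\|\p_x^2 u_k(t)\|_{L_2}^2$. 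Combining with the elementary inequality $\|u_k\|_{H^1}^4\leq 2\|u_k\|_{L_2}^4+2\|\p_x u_k\|_{L_2}^4$ and integrating over $(0,T)$, I obtain
\[
\int_0^T\|u_k\|_{H^1}^4\,dt\leq 2T\sup_{t\in[0,T]}\|u_k(t)\|_{L_2}^4+2\sup_{t\in[0,T]}\|u_k(t)\|_{L_2}^2\int_0^T\|\p_x^2 u_k(t)\|_{L_2}^2\,dt.
\]

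To conclude, I would observe that the continuous embedding $C^\alpha([0,L])\hookrightarrow L_2((0,L))$ together with \eqref{B:7} gives $\sup_{t\in[0,T]}\|u_k(t)\|_{L_2}\to 0$, while the integral $\int_0^T\|\p_x^2 u_k\|_{L_2}^2\,dt$ is uniformly bounded in $k$ because $(f_{\e_k})$ is uniformly bounded in $L_2(0,T;H^2)$ by \eqref{B:1}--\eqref{B:2} and $f\in L_2(0,T;H^2_\Delta)$ by the lower semicontinuity of the norm under the weak convergence \eqref{B:8}. Both terms on the right therefore vanish in the limit $k\to\infty$, which delivers the claimed convergence in $L_4(0,T;H^1)$.

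I do not anticipate any genuine obstacle here: the whole argument rests on results already established in Section \ref{S:2} and on a one-line integration by parts. The only point requiring some care is to invoke the homogeneous Neumann boundary conditions built into the space $H^2_\Delta$, since they are precisely what makes the Poincar\'e-type inequality $\|\p_x u\|_{L_2}^2\leq \|u\|_{L_2}\|\p_x^2 u\|_{L_2}$ available for the difference $u_k$ without any boundary contribution.
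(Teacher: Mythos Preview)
Your argument is correct and follows essentially the same route as the paper: both proofs use the Neumann boundary condition built into $H^2_\Delta$ to integrate by parts and obtain $\|\p_x u_k\|_{L_2}^2\leq \|u_k\|_{L_2}\|\p_x^2 u_k\|_{L_2}$, then conclude by combining the uniform convergence \eqref{B:7} with the uniform $L_2(0,T;H^2)$-bound \eqref{B:2}. The only cosmetic differences are that the paper bounds $\|u_k\|_{L_2}$ by $\sqrt{L}\,\|u_k\|_{L_\infty(Q_T)}$ directly and handles only the seminorm part explicitly (the $L_2$-part of the $H^1$-norm being trivially controlled by \eqref{B:7}), whereas you treat the full $H^1$-norm and phrase the second factor as a uniform $L_2(0,T;H^2)$-bound via weak lower semicontinuity.
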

\begin{proof} We prove only the assertion for $f$.
Since $f(t)$ and $ f_{\e_k}(t)$ belong to $H^2_\Delta$ for almost  all $t\in[0,T],$ we conclude that their first order derivatives at $0$ and $L$ must vanish.
Whence, using integration by parts, we get
\begin{align*}
&\int_{0}^T\left(\int_0^L|\p_x(f_{\e_k}-f)|^2\, dx\right)^2dt=\int_{0}^T\left(\int_0^L\p_x^2(f_{\e_k}-f)(f_{\e_k}-f)\, dx\right)^2dt\\
&\leq \int_0^T\|\p_x^2(f_{\e_k}-f)\|_{L_2}^2\|f_{\e_k}-f\|_{L_2}^2\, dt\leq L\|f_{\e_k}-f\|_{L_\infty(Q_T)}^2\|\p_x^2(f_{\e_k}-f)\|_{L_2(Q_T)}^2,
\end{align*}
and, together with \eqref{B:2} and \eqref{B:7}, we get the desired conclusion.
\end{proof}
Particularly, we obtain from \eqref{e:BB}, that $f_{\e_k}(t)\to f(t)$ and $g_{\e_k}(t)\to g(t)$ in $H^1$ for almost all $t\in[0,T]$, and 
together with the estimate \eqref{B:1} we conclude that
$f, g \in L_\infty(0,T;H^1).$
Furthermore, $f_\e(0)=f_0$ and $g_\e(0)=g_0$ for all $\e\in(0,1]$, so that  \eqref{B:7} yields $f(0)=f_0$ and $g(0)=g_0 $.
The estimate $(4)$ of Theorem \ref{T:1} follows by combining \eqref{B:7},  the assertion $(iii)(a)$ of Proposition \ref{P:1}, and Lemma \ref{L:1} below.

We use now the  energy estimate $(iii)(b) $ of Proposition \ref{P:1},  to establish the assertion $(1)$ of our main result Theorem \ref{T:1}.
\begin{lemma}\label{L:1} The functions $f$ and $g$ found above are non-negative.
\end{lemma}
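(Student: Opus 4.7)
The strategy is to exploit the strong penalization built into $\Phi_\e$ for negative arguments. From the definition \eqref{eq:phi_e}, for $s<0$ we have $\Phi_\e(s) = s^2/(2\e) + s\ln\e + \e\ln\e - \e + 1$, so a negative value of size $\delta$ at a point contributes at least of order $\delta^2/\e$ to the entropy. On the other hand, the estimate $(iii)(b)$ of Proposition \ref{P:1} bounds $\int_0^L \Phi_\e(f_\e(T)) + B\Phi_\e(g_\e(T))\, dx$ by a constant uniform in $\e$ (once we check the initial entropy is uniformly bounded). Combined with the uniform convergence $f_{\e_k}\to f$ from \eqref{B:7}, this forces the negative parts $f_{\e_k}^-(T)$ and $g_{\e_k}^-(T)$ to vanish in $L_2$ as $\e_k\to 0$.

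Concretely, I first note that since $f_0,g_0\in H^1\hookrightarrow L_\infty(0,L)$ are non-negative, the functions $\Phi_\e(f_0)$ and $\Phi_\e(g_0)$ are dominated pointwise by a constant independent of $\e\in(0,1]$. Hence the right-hand side of $(iii)(b)$ is bounded by some $C_0$ uniformly in $\e$, giving
\begin{equation*}
\int_0^L \Phi_\e(f_\e(T))\, dx \le C_0, \qquad \int_0^L \Phi_\e(g_\e(T))\, dx \le C_0/B,
\end{equation*}
uniformly in $\e\in(0,1]$ and $T>0$.

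Next, with $f_{\e_k}^- := \max\{-f_{\e_k},0\}$, rearranging the formula \eqref{eq:phi_e} on the set $\{f_{\e_k}(T)<0\}$ yields
\begin{equation*}
\frac{(f_{\e_k}^-(T))^2}{2\e_k} = \Phi_{\e_k}(f_{\e_k}(T)) + f_{\e_k}^-(T)\ln\e_k - \e_k\ln\e_k + \e_k - 1.
\end{equation*}
Since $\ln\e_k\le 0$ for $\e_k\in(0,1]$, the term $f_{\e_k}^-(T)\ln\e_k$ is non-positive, and the remaining constants $-\e_k\ln\e_k+\e_k-1$ are bounded uniformly in $\e_k$. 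Integrating over $[0,L]$ and using $\Phi_{\e_k}\ge 0$ together with the previous uniform bound, I obtain
\begin{equation*}
\int_0^L (f_{\e_k}^-(T))^2\, dx \le C\,\e_k,
\end{equation*}
for some $C$ independent of $\e_k$ and $T$.

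Finally, the uniform convergence $f_{\e_k}(T) \to f(T)$ in $C^\alpha([0,L])$ from \eqref{B:7}, valid for every $T>0$, implies $f_{\e_k}^-(T) \to f^-(T)$ uniformly, hence in $L_2(0,L)$. Letting $\e_k\to 0$ in the inequality above gives $\|f^-(T)\|_{L_2}=0$, so $f(T)\ge 0$ for every $T>0$. The same argument applied to $g_{\e_k}$, using the $B\Phi_\e(g_{\e_k})$ term in $(iii)(b)$, shows $g(T)\ge 0$. The only subtlety is the handling of the $\ln\e_k$ terms in the formula for $\Phi_\e$ on the negative axis, but fortunately the cross term $f_{\e_k}^-\ln\e_k$ carries the favourable sign for our estimate, so no genuine obstacle arises.
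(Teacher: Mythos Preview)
Your proof is correct and rests on the same mechanism as the paper's: the quadratic penalization $s^2/(2\e)$ in $\Phi_\e$ for $s<0$, combined with the uniform entropy bound $(iii)(b)$ and the uniform convergence \eqref{B:7}. The only difference is packaging. The paper argues by contradiction---assuming $f(T,x_0)<0$, it uses the uniform convergence and continuity to find a neighbourhood on which $f_{\e_k}(T,\cdot)<-\delta$, so that $\Phi_{\e_k}(f_{\e_k}(T,x))\ge \delta^2/(2\e_k)$ there, making the entropy integral blow up---whereas you extract the quantitative bound $\|f_{\e_k}^-(T)\|_{L_2}^2\le C\e_k$ directly and then pass to the limit. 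Your version is slightly more informative (it gives a rate) but otherwise equivalent.
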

\begin{proof} Assume that there exists $(T,x_0)\in Q_\infty$ such that $f(T,x_0)<0$.
Since by \eqref{B:7}  $f_{\e_k}\to f$ in  $C(\ov Q_T)$, we conclude that there exists a  constant $\delta>0 $ and $k_0\in\N$ with the property that
$f_{\e_k}(T,x)<-\delta$ for all $x\in[0,L]$ with $|x-x_0|<\delta$ and all $k\geq k_0.$  
We then infer  from \eqref{eq:phi_e} that
\[
\Phi_{\e_k}(f_{\e_k}(T,x))=\frac{ f_{\e_k}^2(T,x)}{2\e_k}+f_{\e_k}(T,x)\ln(\e_k)+\e_k\ln(\e_k)-\e_k+1\geq\frac{\delta^2}{2\e_k}
\]
for all $x$ and $k$ as above.
This contradicts the assertion $(iii)(b)$ of Proposition \ref{P:1}. 
Clearly, the argument is true when replacing $f$ by $g$, and this proves the claim.
\end{proof} 

In order to deduce the energy estimate Theorem \ref{T:1} $(5)$, we recall \eqref{eq:phi_e} and notice that, 
for all $k\in\N,$ we have $\Phi_{\e_k}(f_{\e_k})\geq \wt\Phi_{\e_k} (f_{\e_k}), $ where
\begin{equation*}
\wt\Phi_{\e_k} (s):=
\left\{
\begin{array}{lll}
(s+\e_k)\ln(s+\e_k)-(s+\e_k)+1,&s\geq0,\\
\e_k\ln(\e_k)-\e_k+1,&s<0.
\end{array}
\right.
\end{equation*}
Given $t\in[0,T],$ the sequence $(\wt\Phi_{\e_k} (f_{\e_k}(t)))$ is bounded in $C([0,L])$ 
and $\wt\Phi_{\e_k} (f_{\e_k}(t))\to \Phi(f(t))$ pointwise on $[0,L]$.
 Lebesgue's dominated convergence implies then
 \begin{equation}
\liminf_{k\to\infty}\int_{0}^L\Phi_{\e_k}(f_{\e_k}(T))\, dx\geq \liminf_{k\to\infty}\int_0^L\wt\Phi_{\e_k} (f_{\e_k}(T))\, dx=\int_0^L\Phi(f(T))\, dx.
\end{equation}
Of course, the relation still remains true when replacing $f$ by $g$.
By virtue of \eqref{B:8}, we may pass to $\liminf_{k\to\infty}$ in relation $(iii)(b)$ of 
Proposition \ref{P:1}, and  obtain in this way the desired energy estimate $(5)$ of Theorem \ref{T:1}.  

To deal with the energy estimate $(6)$ of Theorem \ref{T:1},
we observe first that  for all $k\in\N$
 \[
|a_{\e_k}(f_{\e_k})-f|\leq {\e_k}+|f_{\e_k}-f|,
\]
meaning, by \eqref{B:7}, that 
\begin{equation}\label{B:9}
 a_{\e_k}(f_{\e_k})\to  f\quad\text{and}\quad a_{\e_k}(g_{\e_k})\to  g\quad \text{ in $C(\ov Q_T)$.}
 \end{equation}
For every positive integer $m$, we introduce now the set
\[
Q_T^m:=\{(t,x)\in Q_T\,:\, f(t,x)>m^{-1} \ \text{and} \ g(t,x)>m^{-1}\},
\]
where we may control,  by virtue of the estimate $(iii)(c)$ of Proposition \ref{P:1} and \eqref{B:9}, 
the third order derivatives of both $f_{\e_k}$ and $g_{\e_k}:$
\begin{equation}\label{E:14}
(\p_x^3f_{\e_k}), (\p_x^3g_{\e_k}) \quad \text{are  uniformly bounded in $L_2(Q_T^m)$.}
\end{equation}
Taking into account that $Q_T^+=\cup_{m} Q_T^m,$ we may assume, after possibly extracting a further subsequence, that
\begin{equation*}
\p_x^3f_{\e_k}\rightharpoonup \p_x^3 f,\quad  \p_x^3g_{\e_k}\rightharpoonup \p_x^3g \quad \text{ in $L_2(Q_T^m)$}
\end{equation*}
for all $m\in\N,$ which, together with \eqref{B:9}, implies  
\begin{equation}\label{F:3}
\begin{aligned}
&\sqrt{a_{\e_k}(f_{\e_k})}\p_x^3(Af_{\e_k}+Bg_{\e_k})\rightharpoonup \sqrt{f}\p_x^3(Af+Bg), \\
& \sqrt{a_{\e_k}(g_{\e_k})}\p_x^3(f_{\e_k}+g_{\e_k})\rightharpoonup \sqrt{g}\p_x^3(f+g)
\end{aligned}
 \qquad \text{ in $L_1(Q_T^m)$}.
 \end{equation}
 In fact, by virtue of \eqref{B:3}, the weak convergence in \eqref{F:3} takes place in $L_2(Q_T^m).$
Recalling Proposition \ref{P:1} $(iii)(c)$ and Lemma \ref{L:2},  for $k\to\infty$, we obtain the  
 desired estimates $(2)$ and $(6)$ of Theorem \ref{T:1}. 

In order to complete the proof of Theorem \ref{T:1}, we are left to prove the relations $(3)$.
To this end, we pick $\psi\in H^2_\Delta.$
Since $a_{\e_k}$ is Lipschitz continuous, we obtain from Proposition \ref{P:1} $(i)$ that $a_{\e_k}(f_{\e_k}(t))\in H^1$  for almost all $t\in(0,T)$  and 
\[
\p_x(a_{\e_k}(f_{\e_k}))(t,x)=\chi_{(0,\infty)}(f_{\e_k})\p_xf_{\e_k},\qquad \text{a.e. in $Q_T,$}
\]
whereby $\chi_{(0,\infty)}$ denotes the characteristic function of the interval $(0,\infty).$
Integrating by parts in the first relation of Proposition \ref{P:1} $(ii)$ , we arrive at
\begin{equation}\label{B:B1}
\int_{0}^Lf_{\e_k}(T) \psi\, dx-\int_{0}^Lf_0 \psi\, dx=\int_{0}^T\int_0^L a_{\e_k}(f_{\e_k})\p_x^3(Af_{\e_k}+Bg_{\e_k})\p_x\psi\, dxdt=I_{1,k}+I_{2,k}\\
\end{equation}
where
\begin{align*}
&I_{1,k}:=-\int_{Q_T} \p_x(a_{\e_k}(f_{\e_k}))\p_x^2(Af_{\e_k}+Bg_{\e_k})\p_x\psi\, dxdt,\\ 
&I_{2,k}:=-\int_{Q_T} a_{\e_k}(f_{\e_k})\p_x^2(Af_{\e_k}+Bg_{\e_k})\p_x^2\psi\, dxdt.
\end{align*}
We note the use of $\psi\in H^2_\Delta$ to eliminate the boundary terms in \eqref{B:B1}   due to $\p_x\psi(0)=\p_x\psi(L)=0$.

Combining \eqref{B:8} and \eqref{B:9}, we obtain for  $k\to\infty$  that
 \begin{equation}\label{B:10}
 I_{2,k}\to-\int_{Q_T} f(A\p_x^2f+B\p_x^2g)\p_x^2\psi\, dxdt.
 \end{equation}
We consider now the integral $I_{1,k},$ and notice that in order to show the relation
\begin{align}\label{B:100}
I_{1,k}\to-\int_{Q_T} \p_xf(A\p_x^2f+B\p_x^2g)\p_x\psi\, dxdt
\end{align}
it suffices to prove that 
\begin{align}\label{eq:gau}
\p_x(a_{\e_k}(f_{\e_k}))\to\p_xf\qquad\text{in $L_2(Q_T)$.} 
\end{align}
To this end, we write $\p_x(a_{\e_k}(f_{\e_k}))-\p_xf=\left(\p_x(a_{\e_k}(f_{\e_k}))-\p_xf_{\e_k}\right)+\left(\p_xf_{\e_k}-\p_xf\right),$ and 
conclude from Lemma \ref{L:2} that
$\left(\p_xf_{\e_k}-\p_xf\right)\to0$ in $L_2(Q_T).$
Furthermore, the first term may be written as $\left(\p_x(a_{\e_k}(f_{\e_k}))-\p_xf_{\e_k}\right)=\left(\chi_{(0,\infty)}(f_{\e_k})-1\right)\p_xf_{\e_k},$ and since
$\p_xf_{\e_k}\to\p_xf$ in $L_2(Q_T),$ there exists a function $F\in L_2(Q_T)$ such that, after possibly extracting a further subsequence,
  $|\p_xf_{\e_k}|\leq F$ almost everywhere in $Q_T$
(see the proof of Theorem 3.11 in \cite{B-R}).
We show now that  $\left(\chi_{(0,\infty)}(f_{\e_k})-1\right)\p_xf_{\e_k}\to0 $ almost everywhere in $Q_T.$
Indeed, since $\p_xf_{\e_k}\to\p_xf$ in $L_2(Q_T) ,$ we deduce that  $\p_xf_{\e_k}\to0$ almost everywhere on the set $[f=0].$
Furthermore, on the set  $[f>0]$, relation  \eqref{B:7} implies pointwise convergence $\left(\chi_{(0,\infty)}(f_{\e_k})-1\right)\to0$.
Lebesgue's dominate convergence theorem implies now the desired relation \eqref{eq:gau}, and implicitly \eqref{B:100}.

To conclude, we sum  \eqref{B:7}, \eqref{B:10}, \eqref{B:100} and let $k\to\infty$ in relation \eqref{B:B1} to obtain the first identity of Theorem \ref{T:1} $(3)$.
The corresponding relation for $g$ follows similarly.

\vspace{0.5cm}
\hspace{-0.5cm}{ \bf Acknowledgements} 
The author thanks Philippe Lauren\c cot for deep and fruitful
discussions.

\vspace{-0.3cm}

\end{document}